\keywords{Constructive mathematics, function realizability, metric
  spaces, synthetic topology}
\newcommand{\Ktwo}{\mathcal{K}_2}
\newcommand{\RT}{\mathsf{RT}(\Ktwo)}
\newcommand{\RR}{\mathbb{R}} 
\newcommand{\NN}{\mathbb{N}} 
\newcommand{\BB}{\NN^\NN} 
\newcommand{\INN}{\mathsf{N}} 
\newcommand{\IZZ}{\mathsf{Z}} 
\newcommand{\IRR}{\mathsf{R}} 
\newcommand{\IBB}{\mathsf{B}} 
\newcommand{\ISS}{\mathsf{\Sigma}} 
\newcommand{\nablaTwo}{\nabla\mathsf{2}}
\newcommand{\INNx}{\INN^{*}} 
\newcommand{\prefix}[2]{\overline{#1}(#2)} 
\newcommand{\length}[1]{|#1|} 
\newcommand{\padz}[1]{#10^\omega} 
\newcommand{\pow}[1]{\mathcal{P}(#1)} 
\newcommand{\onto}{\twoheadrightarrow}
\newcommand{\into}{\rightarrowtail}
\newcommand{\some}[1]{\exists #1 \,.\,}
\newcommand{\all}[1]{\forall #1 \,.\,}
\newcommand{\lthen}{\Rightarrow}
\newcommand{\liff}{\Leftrightarrow}
\newcommand{\set}[1]{\{#1\}} 
\newcommand{\such}{\mid} 
\begin{document}

\title{Every metric space is separable in function realizability}

\author{Andrej Bauer}
\address{Andrej Bauer\\
Faculty of Mathematics and Physics\\
University of Ljubljana\\
Jadranska 19\\
1000 Ljubljana\\
Slovenia}
\email{Andrej.Bauer@andrej.com}
\thanks{The first author acknowledges that this material is based upon work
  supported by the Air Force Office of Scientific Research under award number
  FA9550-17-1-0326.}

\author{Andrew Swan}
\address{Andrew Swan\\
Institute for Logic, Language and Computation\\
University of Amsterdam\\
Science Park 107\\
1098 XG Amsterdam\\
Netherlands}
\email{wakelin.swan@gmail.com}

\begin{abstract}
  We first show that in the function realizability topos $\RT$ every metric space is
  separable, and every object with decidable equality is countable. More generally,
  working with synthetic topology, every $T_0$-space is separable and every discrete space
  is countable.
  It follows that intuitionistic logic does not show the existence of a non-separable
  metric space, or an uncountable set with decidable equality, even if we assume
  principles that are validated by function realizability, such as Dependent and Function
  choice, Markov's principle, and Brouwer's continuity and fan principles.
\end{abstract}

\maketitle

Are there any uncountable sets with decidable equality in constructive mathematics, or at
least non-separable metric spaces? We put these questions to rest by showing that in the
function realizability topos all metric spaces are separable, and consequently all sets
with decidable equality countable. Therefore, intuitionistic logic does not show existence
of non-separable metric spaces, even if we assume principles that are validated by
function realizability, among which are the Dependent and Function choice, Markov's
principle, and Brouwer's continuity and fan principles.

\section{Function realizability}

We shall work with the realizability topos~$\RT$, see~\cite[\S4.3]{oosten08:_realiz},
which is based on Kleene's function realizability~\cite{KleeneSC:fouim}. We carry out the
bulk of the argument in the internal language of the topos, which is intuitionistic logic
with several extra principles, cf.~Proposition~\ref{prop:rt-principles}.

We write $\INN$ and $\IRR$ for the objects of the natural numbers and the real numbers,
respectively. The Baire space is the object $\IBB = \INN^\INN$ of infinite number
sequences. It is metrized by the metric $u : \IBB \times \IBB \to \IRR$ defined by
\begin{equation*}
  u(\alpha, \beta) =
  \lim_{n \to \infty}
  2^{-\min \set{k \leq n \such k = n \lor \alpha_k \neq \beta_k}}.
\end{equation*}
If the first index at which $\alpha$ and $\beta$ differ is $k$, then
$u(\alpha, \beta) = 2^{-k}$.

\begin{prop}
  \label{prop:rt-principles}
  The realizability topos~$\RT$ validates the following principles:
  \begin{enumerate}
  \item \emph{Countable choice:} a total relation on $\INN$ has a choice map.
  \item \emph{Extended function choice:} if $S \subseteq \IBB$ is $\lnot\lnot$-stable then every
    total relation on $S$ has a choice map.
  \item \emph{Extended continuity principle:} if $S \subseteq \IBB$ is $\lnot\lnot$-stable
    then every map $S \to \IBB$ is continuous.
  \item \emph{Excluded middle for predicates on~$\INN$}: if $\phi(n)$ is a formula whose
    only parameter is $n \in \INN$, then $\all{n \in \INN} \phi(n) \lor \lnot \phi(n)$.
  \end{enumerate}
\end{prop}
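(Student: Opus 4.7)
The plan is to verify each of the four principles by unpacking them in terms of realizers and working in the classical metatheory. Principle (1) is essentially immediate: a realizer of $\all{n \in \INN} \some{a} R(n,a)$ is an element $r \in \BB$ whose application to a code of $n \in \NN$ returns a pair consisting of a realizer of some $a$ together with a realizer of $R(n,a)$. Projecting to the first component of each pair gives a realizer for the desired choice function $n \mapsto a_n$.

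For principle (4), I would argue metatheoretically using classical excluded middle. For each $n \in \NN$, ask classically whether $\phi(n)$ admits a realizer. If it does, choose one such realizer and tag it to represent the left disjunct; otherwise $\phi(n)$ has no realizers at all, so any function vacuously realizes $\lnot\phi(n)$, and we tag it as the right disjunct. Metatheoretic countable choice then packages these selections into a single element of $\BB$ realizing $\all{n \in \INN} \phi(n) \lor \lnot \phi(n)$. The hypothesis that the only parameter is $n \in \INN$ is essential: it makes realizability of $\phi(n)$ a well-defined metatheoretic question for each $n$ individually, rather than one depending on additional parameters whose realizers would have to be chosen uniformly in a more delicate way.

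Principles (2) and (3) form the technical heart. The key structural fact to establish is that a $\lnot\lnot$-stable subobject $S \subseteq \IBB$ collapses to its classical content: there is a subset $\carrier{S} \subseteq \BB$ such that $\alpha \in S$ holds in the internal language exactly when $\alpha \in \carrier{S}$, and membership requires no nontrivial realizer (any element of $\BB$ witnesses it). Granted this, the realizer of a total relation on $S$ in (2) becomes a function on $\carrier{S}$ selecting, for each $\alpha$, a realizer of some $a$ with $R(\alpha, a)$, which is exactly the data of a choice map. For (3), a morphism $S \to \IBB$ is represented by an element of $\Ktwo$ whose restriction to $\carrier{S}$ is, by the very construction of $\Ktwo$-application, continuous in the Baire topology, and this realizer witnesses continuity in the internal sense.

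The hard step will be establishing the structural description of $\lnot\lnot$-stable subobjects of $\IBB$. This requires tracing through the $\lnot\lnot$-modality in $\RT$ and verifying that stability of $S$ forces the realizability structure on the membership predicate to degenerate, so that morphisms out of $S$ are controlled entirely by their set-theoretic behaviour on $\carrier{S}$. Once this reduction is in place, (2) and (3) follow by transferring the standard function choice and continuity arguments for $\IBB$ itself, familiar from the function realizability literature, to the $\lnot\lnot$-stable subobject $S$.
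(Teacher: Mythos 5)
Your proposal is correct and follows essentially the same route as the paper: (1)--(3) are reduced to the standard function-realizability arguments for $\IBB$ itself (van Oosten, Props.~4.3.2 and 4.3.4) via the observation that elements of a $\lnot\lnot$-stable $S \subseteq \IBB$ are realized just by their associates, and (4) is realized metatheoretically using external classical logic plus countable choice to select realizers, exploiting that $n$ is the only parameter. The only cosmetic difference is that the paper proves the schema $\forall n \in \INN .\, \lnot\lnot\psi(n) \Rightarrow \psi(n)$ first and then instantiates $\psi(n) \equiv \phi(n) \lor \lnot\phi(n)$, whereas you realize the disjunction directly by a case split on whether $\phi(n)$ has a realizer; the underlying argument is the same.
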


\begin{proof}
  \parbox{0pt}{}
  \begin{enumerate}
  \item Every realizability topos validates Countable choice, and
    \cite[Prop.~4.3.2]{oosten08:_realiz} does so specifically for $\RT$.

  \item Recall that a subobject $S \subseteq \IBB$ is $\lnot\lnot$-stable when
    $\lnot\lnot (\alpha \in S)$ implies $\alpha \in S$ for all $\alpha \in \IBB$. The
    realizability relation on such an $S$ is inherited by that of $\IBB$, i.e., the
    elements of~$S$ are realized by Kleene's associates. The argument proceeds the same
    way as \cite[Prop.~4.3.2]{oosten08:_realiz}, which shows that choice holds in the case
    $S = \IBB$.

  \item Once again, if $S \subseteq \IBB$ is $\lnot\lnot$-stable, then maps $S \to \IBB$
    are realized by Kleene's associates, just like maps from $\IBB \to \IBB$. The argument
    proceeds the same way as continuity of maps $\IBB \to \IBB$
    in~\cite[Prop.~4.3.4]{oosten08:_realiz}.

  \item Let us first show that, for a formula $\psi(n)$ whose only parameter is
    $n \in \INN$, the sentence
    \begin{equation}
      \label{eq:n-lem-eq}
      \all{n \in \INN} \lnot\lnot \psi(n) \lthen \psi(n)
    \end{equation}
    is realized. The formula $\psi(n)$ is interpreted as a subobject of~$\INN$, which is
    represented by a map $f : \NN \to \pow{\BB}$. Using a bit of (external) classical
    logic and Countable choice we obtain a map $c : \NN \to \BB$ such that
    \begin{equation*}
      \all{n \in \NN}
      f(n) \neq \emptyset
      \lthen
      c(n) \in f(n),
    \end{equation*}
    which says that $c$ can be used to build a realizer for~\eqref{eq:n-lem-eq}. 

    Now, given a formula $\phi(n)$ whose only parameter is $n \in \INN$, take $\psi(n)$ to
    be $\phi(n) \lor \lnot \phi(n)$. Because $\lnot\lnot (\phi(n) \lor \lnot\phi(n))$
    holds, \eqref{eq:n-lem-eq} reduces to the desired statement
    $\all{n \in \INN} \phi(n) \lor \lnot \phi(n)$. \qedhere
\end{enumerate}
\end{proof}

Note that the last part of the previous proposition does \emph{not} state the
validity of the internal statement
\begin{equation*}
  \all{\phi \in \Omega^\INN} \all{n \in \INN} \phi(n) \lor \lnot\phi(n).
\end{equation*}
Indeed, such a statement cannot be valid in any realizability topos because it implies
excluded middle (given $p \in \Omega$, consider $\phi(n) \equiv p$). Rather, we have a
\emph{schema} which holds for each formula~$\phi(n)$.

\section{Metric spaces in function realizability}

Henceforth we argue in the internal language of~$\RT$. A minor but common
complication arises because the internal language does not allow quantification
over all objects of the topos. Thus, when we make a statement $\phi_{(X,d)}$
about all metric spaces $(X,d)$ in the topos, this is to be understood
schematically: given any object $X$ and morphism $d : X \times X \to \IRR$, if
the topos validates ``$d$ is a metric'' then it also validates $\phi_{(X,d)}$.

A consequence of Countable choice is that the object of reals~$\IRR$
is a continuous image of $\IBB$, for instance by composing any bijection
$\IBB \cong \IZZ^\INN$ with the surjection
$\IZZ^\INN \onto \IRR$ defined by taking
$\alpha \in \IZZ^\INN$ to
$\alpha_0 + \sum_{k = 0}^\infty 2^{-k} \cdot \max(-1, \min(1,
\alpha_{k+1}))$.

We follow the convention that a set $X$ is \emph{countable} if there
exists a surjection $\INN \onto X + 1$, and we refer to such a
surjection as an \emph{enumeration} of $X$. Note in particular that
the empty set is countable, and that if $X$ is inhabited, then it is
countable if and only if there is a surjection $\INN \onto X$.

Recall that a metric space $(X, d)$ is \emph{separable} if there exists a countable subset
$D \subseteq X$ such that, for all $x \in X$ and $k \in \INN$, there exists $y \in D$ such
that $d(x, y) \leq 2^{-k}$.

\begin{prop}
  \label{prop:separable-quotient}
  Suppose $(X, d_X)$ is a separable metric space and $q : X \onto Y$ is a surjection onto
  a metric space $(Y, d_Y)$ such that $d_Y \circ (q \times q) : X \times X \to \IRR$ is
  continuous with respect to the metric $d_X$. Then $Y$ is separable.
\end{prop}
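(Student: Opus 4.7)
The plan is to take as a countable dense subset of $Y$ the image $D' := q(D)$, where $D \subseteq X$ is a countable dense subset witnessing the separability of $X$, and to verify both properties in turn.

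For countability, I would start from an enumeration $e : \INN \onto D + \One$ of $D$ and extend $q|_D : D \to D'$ to a surjection $D + \One \onto D' + \One$ by acting as the identity on the distinguished point. Composition with $e$ yields an enumeration of $D'$.

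The content of the argument lies in density. Given $y \in Y$ and $k \in \INN$, I would use surjectivity of $q$ to fix $x \in X$ with $q(x) = y$, then invoke continuity of $f := d_Y \circ (q \times q) : X \times X \to \IRR$ at the point $(x, x)$. Since $f(x, x) = 0$, this furnishes $\delta > 0$ such that $d_X(x, x') < \delta$ entails $f(x, x') < 2^{-k}$ (in any reasonable product metric on $X \times X$, keeping the first coordinate fixed at $x$ makes the product distance equal to $d_X(x, x')$). Density of $D$ in $X$ then produces $x' \in D$ within $\delta$ of $x$, so that $y' := q(x') \in D'$ satisfies $d_Y(y, y') < 2^{-k}$.

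I do not foresee a serious obstacle. The one point worth flagging is that the whole argument proceeds inside the internal language of $\RT$: surjectivity of $q$ is being used to eliminate a hypothesis of the form $\some{x \in X} q(x) = y$, which is legitimate because the conclusion we are after is itself existential, and ``continuity of $f$'' is read in the pointwise sense furnishing a modulus $\delta$ at each point. No appeal to any realizability-specific principle from Proposition~\ref{prop:rt-principles} is required.
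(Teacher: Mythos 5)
Your argument is correct and is essentially the paper's own proof: take the image $q(D)$ of a countable dense subset $D \subseteq X$, and use pointwise continuity of $d_Y \circ (q \times q)$ at $(x,x)$ together with density of $D$ to approximate any $y = q(x)$. The only difference is that you spell out the (routine) countability of $q(D)$ via the induced surjection $\INN \onto q(D) + \One$, which the paper leaves implicit.
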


\begin{proof}
  Let $D \subseteq X$ be a countable dense subset of~$X$. We claim that its image $q(D)$
  is dense in~$Y$. Consider any $y \in Y$ and $k \in \INN$. There is $x \in X$ such that
  $q(x) = y$. Because $d_Y \circ (q \times q)$ is continuous at~$(x, x)$ there exists
  $m \in \INN$ such that, for all $z \in X$, if $d_X(x, z) \leq 2^{-m}$ then
  \begin{equation*}
    d_Y(q(x), q(z)) =
    |d_Y(q(x), q(z)) - d_Y(q(x), q(x))| \leq 2^{-k}.
  \end{equation*}
  Since $D$ is dense there exists $z \in D$ such that
  $d_X(x, z) \leq 2^{-m}$, hence $d_Y(y, q(z)) = d_Y(q(x), q(z)) \leq 2^{-k}$.
\end{proof}

\begin{prop}
  \label{prop:subB-separable}
  For any subobject $S \into \IBB$, the topos $\RT$ validates the statement that $(S, u)$
  is a separable metric space.
\end{prop}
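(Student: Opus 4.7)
The plan is to build a countable dense subset of $S$ by enumerating, over finite sequences $s \in \INNx$, an element of $S$ extending $s$ whenever any such element exists. The workhorses are parts~1 and~4 of Proposition~\ref{prop:rt-principles}.

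First I would fix a bijection $\INNx \cong \INN$ so that predicates indexed by finite sequences satisfy the formal requirements of the excluded-middle schema. For each $s \in \INNx$ let $\phi(s)$ be the formula $\some{\alpha \in S} \isprefix{s}{\alpha}$. Its only $\INN$-typed free variable is $s$; the object $S$ enters the formula as a fixed schematic constant rather than as a parameter, so part~4 yields $\all{s \in \INNx} \phi(s) \lor \lnot \phi(s)$.

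Next I would assemble a total relation $R$ on $\INNx \times (S + 1)$ by putting $R(s, \mathrm{inl}(\alpha))$ iff $\alpha \in S$ and $\isprefix{s}{\alpha}$, and $R(s, \mathrm{inr}(\star))$ iff $\lnot \phi(s)$. Totality of $R$ follows from the disjunction above, so Countable choice produces a map $h : \INNx \to S + 1$ compatible with~$R$. Letting $D \subseteq S$ be the image of the $\mathrm{inl}$-part of $h$, a routine re-indexing of $h$ (prepending one $\mathrm{inr}(\star)$ to make sure it lies in the range) yields a surjection $\INN \onto D + 1$, witnessing that $D$ is countable.

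For density, given $x \in S$ and $k \in \INN$, take $s = \prefix{x}{k+1}$. Then $\phi(s)$ holds, witnessed by $x$, so $h(s) = \mathrm{inl}(y)$ for some $y \in D$ with $\isprefix{s}{y}$. Both $x$ and $y$ extend $s$, hence agree on positions $0, \ldots, k$, and the definition of $u$ gives $u(x, y) \leq 2^{-(k+1)} \leq 2^{-k}$. The one delicate point I expect is justifying the schema of part~4 for $\phi$; this is safe precisely because we are proving the proposition schematically in $S$, so $S$ features in $\phi$ only as a fixed constant, leaving $s$ as the sole $\INN$-typed free variable.
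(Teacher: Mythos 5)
Your proof is correct and follows essentially the same route as the paper: decide, via the excluded-middle schema on $\INN \cong \INNx$ (legitimate since $S$ occurs only as a fixed schematic constant), whether each finite sequence has an extension in $S$, then use Countable choice to pick witnesses $\INNx \to S + 1$ and verify density on prefixes. The only cosmetic difference is that you phrase the predicate with the prefix relation $\isprefix{s}{\alpha}$ instead of the equivalent condition $u(\padz{s},\alpha) \leq 2^{-\length{s}}$ used in the paper, which lets you bound $u(x,y)$ directly rather than via the triangle inequality.
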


\begin{proof}
  Let $\INNx$ be the object of finite sequences of numbers. We write $\length{a}$ for the
  length~$n$ of a sequence $a = (a_0, \ldots, a_{n-1})$. For $\alpha \in \IBB$ and
  $k \in \INN$, let $\prefix{\alpha}{k} = (\alpha_0, \ldots, \alpha_{k-1})$ be the prefix
  of $\alpha$ of length~$k$. Given a finite sequence $a = (a_0, \ldots, a_{n-1})$, let
  $\padz{a}$ be its padding by zeroes,
  \begin{equation*}
    \padz{a} = (a_0, \ldots, a_{n-1}, 0, 0, 0, \ldots).
  \end{equation*}
  Notice that $u(\padz{\prefix{\alpha}{k}}, \alpha) \leq 2^{-k}$ for all
  $\alpha \in \IBB$ and $k \in \INN$.

  Because $\INNx$ is isomorphic to $\INN$, we may apply Excluded middle for predicates
  on~$\INN$ to establish
  \begin{equation*}
    \all{a \in \INNx} (\some{\alpha \in S} u(\padz{a}, \alpha) \leq 2^{-\length{a}}) \lor
    \lnot(\some{\alpha \in S} u(\padz{a}, \alpha) \leq 2^{-\length{a}}).
  \end{equation*}
  By Countable choice there is a map $c : \INNx \to S + 1$ such that, for all
  $a \in \INNx$, if there exists $\alpha \in S$ with
  $u(\padz{a}, \alpha) \leq 2^{-\length{a}}$ then $c(a) \in S$ and
  $u(\padz{a}, c(a)) \leq 2^{-\length{a}}$. We claim that~$c$ enumerates a dense sequence
  in~$S$. To see this, consider any $\alpha \in S$ and $k \in \INN$. Because
  $u(\padz{\prefix{\alpha}{k+1}}, \alpha) \leq 2^{-k-1}$, we have
  $c(\prefix{\alpha}{k+1}) \in S$ and therefore
  \begin{multline*}
    u(\alpha, c(\prefix{a}{k+1}))
    \leq \\
    u(\alpha, \padz{\prefix{a}{k+1}}) + u(\padz{\prefix{a}{k+1}}, c(\prefix{a}{k+1}))
    \leq 2^{-k-1} + 2^{-k-1} = 2^{-k}.
    \qedhere
  \end{multline*}
\end{proof}

\begin{prop}
  \label{prop:subqoutient-separable}
  A metric space is separable if its carrier is the quotient of a $\lnot\lnot$-stable
  subobject of~$\IBB$.
\end{prop}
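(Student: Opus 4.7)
The plan is to combine Propositions~\ref{prop:subB-separable} and~\ref{prop:separable-quotient}. Suppose the carrier $Y$ is a quotient $q : S \onto Y$ of a $\lnot\lnot$-stable subobject $S \into \IBB$, and $(Y, d_Y)$ is a metric space. By Proposition~\ref{prop:subB-separable} the space $(S, u)$ is separable, so Proposition~\ref{prop:separable-quotient} applies and reduces the task to verifying that
\begin{equation*}
  d_Y \circ (q \times q) : S \times S \to \IRR
\end{equation*}
is continuous with respect to the Baire metric~$u$.

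To establish this continuity I would factor the map through~$\IBB$. Fix a continuous surjection $\pi : \IBB \onto \IRR$, as supplied by the remarks preceding Proposition~\ref{prop:subB-separable}. Under the standard pairing isomorphism $\IBB \times \IBB \cong \IBB$, the product $S \times S$ becomes a $\lnot\lnot$-stable subobject of~$\IBB$, since $\lnot\lnot$-stability is preserved by finite products. The relation
\begin{equation*}
  R((x, y), \beta) \liff \pi(\beta) = d_Y(q(x), q(y))
\end{equation*}
is total on $S \times S$ because $\pi$ is surjective, so Extended function choice (Proposition~\ref{prop:rt-principles}) provides a map $h : S \times S \to \IBB$ such that $\pi \circ h = d_Y \circ (q \times q)$.

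Now the Extended continuity principle, applied to $h : S \times S \to \IBB$, shows that $h$ is continuous with respect to~$u$. Since $\pi$ is continuous as a map of metric spaces, the composition $d_Y \circ (q \times q) = \pi \circ h$ is continuous as well, and the claim follows from Proposition~\ref{prop:separable-quotient}.

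The main obstacle I anticipate is the continuity step itself: the Extended continuity principle is stated for maps into~$\IBB$, whereas $d_Y$ takes values in~$\IRR$, and the principle needs a $\lnot\lnot$-stable subspace of~$\IBB$ as the domain. Both issues are resolved by the same observation, namely that $S \times S$ inherits $\lnot\lnot$-stability from~$S$ under the pairing isomorphism, which allows us first to lift through the continuous surjection $\pi$ (using Extended function choice) and then to apply Extended continuity to the lift.
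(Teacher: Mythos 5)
Your proposal is correct and follows essentially the same route as the paper: separability of $(S,u)$ from Proposition~\ref{prop:subB-separable}, reduction via Proposition~\ref{prop:separable-quotient}, and continuity of $d_Y \circ (q \times q)$ obtained by lifting through a continuous surjection $\IBB \onto \IRR$ with Extended function choice and then applying the Extended continuity principle to the lift. Your explicit remark that $S \times S$ is, under pairing, a $\lnot\lnot$-stable subobject of $\IBB$ only spells out a step the paper leaves implicit.
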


\begin{proof}
  Suppose $(X, d)$ is a metric space such that there exist a $\lnot\lnot$-stable
  $S \subseteq \IBB$ and a surjection $q : S \onto X$.
  By Proposition~\ref{prop:subB-separable}, the space $(S, u)$ is separable. We may apply
  Proposition~\ref{prop:separable-quotient}, provided that
  $d \circ (q \times q) : S \times S \to \IRR$ is continuous with respect to~$u$. By
  the Extended function choice, $d \circ (q \times q)$ factors through a continuous surjection
  $\IBB \onto \IRR$ as
  \begin{equation*}
    \xymatrix{
      {S \times S}
      \ar[dr]_{d \circ (q \times q)}
      \ar[rr]^{f}
      & &
      {\IBB}
      \ar@{->>}[ld]
      \\
      &
      {\IRR}
    }
  \end{equation*}
  By the Extended Continuity principle the map~$f$ is
  continuous, hence $d \circ (q \times q)$ is continuous, too.
\end{proof}

Before proceeding we review the notion of a \emph{modest}
object~\cite[Def.~3.2.23]{oosten08:_realiz}: $X$ is modest when it has $\lnot\lnot$-stable
equality and is orthogonal to the object
$\nablaTwo$~\cite[Prop.~3.2.22]{oosten08:_realiz}.\footnote{The results
  of~\cite[\S3.2]{oosten08:_realiz} refer specifically to the effective topos, but are
  easily adapted to any realizability topos, as long as one replaces $\INN$ with the
  underlying partial combinatory algebra, especially
  in~\cite[Def.~3.2.17]{oosten08:_realiz}.}
The modest objects are, up to isomorphism, the quotients by $\lnot\lnot$-stable
equivalence relations of $\lnot\lnot$-stable subobjects of the underlying
partial combinatory algebra, which in our case is the Baire space~$\IBB$. The
powers and the subobjects of a modest set are modest (see the remark
after~\cite[Def.~3.2.23]{oosten08:_realiz} about applicability
of~\cite[Prop.~3.2.19]{oosten08:_realiz} to modest objects).

\begin{thm}
  \label{thm:metric-separable}
  Every metric space in $\RT$ is separable.
\end{thm}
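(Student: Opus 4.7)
The plan is to exhibit every metric space as a quotient of a $\lnot\lnot$-stable subobject of $\IBB$, so that Proposition~\ref{prop:subqoutient-separable} applies. By the characterization of modest objects recalled just before the theorem, this reduces to showing that every metric space $(X, d)$ is modest.

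First I would observe that $\IRR$ is modest: it has $\lnot\lnot$-stable equality (two reals are equal iff their difference has absolute value below every $2^{-n}$), and it is a quotient of $\IBB$ as noted at the start of the section, so it fits the modest characterization. Since powers of modest objects are modest, $\IRR^X$ is modest for any object $X$.

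Next I would use the Kuratowski-style embedding $\iota : X \to \IRR^X$ given by $\iota(x) = d(x, -)$. This map is a monomorphism in the internal language: if $\iota(x) = \iota(y)$ then evaluating at $y$ yields $d(x, y) = d(y, y) = 0$, and the metric axioms force $x = y$. Hence $X$ is a subobject of the modest object $\IRR^X$, and subobjects of modest objects are modest, so $X$ is modest.

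Finally, because $X$ is modest, it is isomorphic to a quotient of a $\lnot\lnot$-stable subobject $S \subseteq \IBB$, and Proposition~\ref{prop:subqoutient-separable} yields separability. No single step looks like a real obstacle; the main thing to be careful about is that the argument for $\IRR$ being modest and the embedding $\iota$ genuinely live in the internal language of $\RT$, but both facts are standard consequences of the metric axioms together with $\lnot\lnot$-stability of equality on $\IRR$.
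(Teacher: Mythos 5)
Your proposal is correct and matches the paper's proof essentially step for step: embed $X$ into the modest object $\IRR^X$ via the transpose of the metric, conclude $X$ is modest and hence a quotient of a $\lnot\lnot$-stable subobject of $\IBB$, and apply Proposition~\ref{prop:subqoutient-separable}. The only difference is that you spell out the monomorphism check and the $\lnot\lnot$-stability of equality on $\IRR$, which the paper leaves implicit.
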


\begin{proof}
  Consider a metric space $(X, d)$ in $\RT$. The object of reals~$\IRR$ is modest because
  it has $\lnot\lnot$-stable equality and is a quotient of~$\IBB$. Its power $\IRR^X$ is
  modest, and because the transpose of the metric $\widetilde{d} : X \into \IRR^X$
  embeds~$X$ into $\IRR^X$, the carrier $X$ is modest, therefore a quotient of a
  $\lnot\lnot$-stable subobject of~$\IBB$. We may apply
  Proposition~\ref{prop:subqoutient-separable}.
\end{proof}

\begin{thm}
  \label{thm:decidable-countable}
  In $\RT$ every object with decidable equality is countable.
\end{thm}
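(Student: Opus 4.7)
The plan is to reduce to Theorem~\ref{thm:metric-separable} by equipping $X$ with the discrete metric. Since equality on $X$ is decidable, for any $x, y \in X$ I can case-split on $x = y$ versus $x \neq y$ and define $d : X \times X \to \IRR$ by $d(x, y) = 0$ when $x = y$ and $d(x, y) = 1$ otherwise. I would then verify that $d$ is a metric: symmetry, non-negativity, and the property that $d(x, y) = 0$ iff $x = y$ are immediate from the definition, while the triangle inequality follows by checking the finitely many cases of whether $d(x, z)$ and $d(z, y)$ are each $0$ or $1$.

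By Theorem~\ref{thm:metric-separable} the space $(X, d)$ is then separable, so there exists a countable $D \subseteq X$ such that for every $x \in X$ and every $k \in \INN$ some $y \in D$ satisfies $d(x, y) \leq 2^{-k}$. Instantiating at $k = 1$ yields $y \in D$ with $d(x, y) \leq 1/2$; since $d$ only takes the values $0$ and $1$, this forces $d(x, y) = 0$, hence $x = y$, and so $x \in D$. Thus $D = X$ as subobjects of $X$, and any enumeration $\INN \onto D + 1$ of $D$ is already an enumeration of $X$.

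The only mildly subtle step is the final collapse from $d(x, y) \leq 1/2$ to $x = y$, which relies on the two-valuedness of $d$ and ultimately on decidable equality. Otherwise the argument is essentially a one-step reduction to the preceding theorem, and I do not foresee any genuine obstacle; the countability of $D$ transfers to $X$ without further work because density at any scale strictly below $1$ in a $\{0,1\}$-valued metric is the same as containment.
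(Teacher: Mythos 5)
Your proposal is correct and follows essentially the same route as the paper: equip $X$ with the discrete metric (using decidable equality to define it), invoke Theorem~\ref{thm:metric-separable} to get a countable dense subset $D$, and observe that density at scale below $1$ in a $\{0,1\}$-valued metric forces $D = X$. The paper's proof is the same argument, stated slightly more briefly.
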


\begin{proof}
  The precise statement is: for any object $X$ in $\RT$, $\RT$ validates the statement
  ``if $X$ has decidable equality then $X$ is countable''.

  We argue internally. If equality on $X$ is decidable then we may define the discrete
  metric $d : X \times X \to \RR$ by
  \begin{equation*}
    d(x,y) =
    \begin{cases}
      0 & \text{if $x = y$,}\\
      1 & \text{otherwise}
    \end{cases}
  \end{equation*}
  Because $(X, d)$ is separable it contains a countable dense subset $D \subseteq X$. But
  then $D = X$, because for any $x \in X$, there is $y \in D$ such that $d(x, y) < 1/2$,
  which implies $x = y$.
\end{proof}

The upshot of Theorems~\ref{thm:metric-separable}
and~\ref{thm:decidable-countable} is that in constructive mathematics a
non-separable metric space cannot be constructed, and neither can an uncountable
set with decidable equality, for such constructions could be interpreted in
$\RT$ to give counterexamples to the theorems.

The theorems fail for \emph{families} of objects. For instance, while for any specific
subobject $S \into \INN$ the statement ``$S$ is countable'' is valid, the internal
statement ``every subobject of~$\INN$ is countable'' is invalid. It is a variation of
\emph{Kripke's schema}~\cite{troelstra69:_princ} which together with Markov's principle
implies Excluded middle, as follows.

\begin{prop}
  \label{prop:countableimplieslem}
  If every subobject of~$\INN$ is countable and Markov's principle holds, then Excluded
  middle holds as well.
\end{prop}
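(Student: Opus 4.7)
The plan is to derive double negation elimination for all propositions, which is intuitionistically equivalent to excluded middle, by using the countability hypothesis to encode an arbitrary proposition as a $\Sigma^0_1$-statement about a binary sequence, and then discharging the double negation by Markov's principle.

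Given any proposition $p$, I would consider the subobject $S_p = \set{n \in \INN \such p} \subseteq \INN$, i.e.\ the subobject classified by the constant predicate $p$; it equals $\INN$ when $p$ holds and $\emptyset$ when $\lnot p$, though of course we cannot decide which. By the countability hypothesis there exists a surjection $f : \INN \onto S_p + \One$. Because $S_p + \One$ is a coproduct, the question ``does $f(n)$ lie in the $S_p$-summand or is it the adjoined point?'' is decidable, so one may define a binary sequence $\alpha : \INN \to \Two$ by setting $\alpha_n = 1$ if $f(n)$ lies in the $S_p$-summand, and $\alpha_n = 0$ otherwise.

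I would then verify $p \liff \some{n \in \INN} \alpha_n = 1$. The right-to-left direction is immediate: if $\alpha_n = 1$ for some $n$, then $S_p$ is inhabited, and since membership in $S_p$ is equivalent to $p$, this forces $p$. Conversely, if $p$ holds then $S_p = \INN$, so the $S_p$-summand of $S_p + \One$ is inhabited, and surjectivity of $f$ produces an $n$ with $f(n)$ in that summand, giving $\alpha_n = 1$.

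Finally, given $\lnot\lnot p$, we have $\lnot\lnot \some{n \in \INN} \alpha_n = 1$, and Markov's principle applied to the decidable sequence $\alpha$ yields $\some{n \in \INN} \alpha_n = 1$, hence $p$. Thus every proposition satisfies $\lnot\lnot p \lthen p$, and combined with the intuitionistic theorem $\lnot\lnot (p \lor \lnot p)$ this gives $p \lor \lnot p$. The main technical step is the $\Sigma^0_1$ encoding of an arbitrary $p$ as $\some{n} \alpha_n = 1$, which is exactly what the countability of $S_p$ buys us; once $\alpha$ is in hand, Markov's principle closes the argument routinely.
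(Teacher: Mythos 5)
Your proof is correct and follows essentially the same route as the paper: encode an arbitrary proposition $p$ via the subobject $\set{n \in \INN \such p}$, extract a binary sequence from its enumeration so that $p$ is equivalent to $\some{n \in \INN} \alpha_n = 1$, and apply Markov's principle to get $\lnot\lnot$-stability of $p$, hence excluded middle. The only difference is that you spell out in more detail the construction of the sequence from the surjection onto $S_p + \One$, which the paper leaves implicit.
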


\begin{proof}
  Let us show that the stated assumptions imply that every truth value $p$ is
  $\lnot\lnot$-stable. From an enumeration of the subobject $\set{n \in \INN \such p}$,
  which exists by assumption, we may construct $f : \INN \to \set{0, 1}$ such that~$p$ is
  equivalent to $\some{n \in \INN} f(n) = 1$. Markov's principle says that such a
  statement is $\lnot\lnot$-stable.
\end{proof}

\begin{cor}
  The statement ``not every subobject of~$\INN$ is countable'' is
  valid in~$\RT$.
\end{cor}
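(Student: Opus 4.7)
The plan is to combine Proposition~\ref{prop:countableimplieslem} with the continuity principles that $\RT$ validates and refute its hypothesis by an internal contradiction. Working inside~$\RT$, I would assume for contradiction that every subobject of~$\INN$ is countable. Since $\RT$ is known to validate Markov's principle---a standard property of Kleene's function realizability, explicitly mentioned in the introduction---Proposition~\ref{prop:countableimplieslem} then yields that excluded middle holds internally.

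Next I would derive a contradiction from internal excluded middle using the Extended continuity principle from Proposition~\ref{prop:rt-principles}(3). Taking $S = \IBB$, which is trivially $\lnot\lnot$-stable, every map $\IBB \to \IBB$ must be continuous. Excluded middle, however, makes membership in the subset $\set{\alpha \in \IBB \such \all{n \in \INN} \alpha_n = 0}$ decidable, so one can define a map $f \colon \IBB \to \IBB$ sending $\alpha$ to the constant-zero sequence if $\alpha$ is itself the constant-zero sequence, and to the constant-one sequence otherwise. This $f$ is discontinuous at the constant-zero sequence: for any candidate modulus $n$, the sequence obtained by changing the $n$-th coordinate to~$1$ agrees with the constant-zero sequence on the first $n$ entries but is sent to a different output, contradicting the continuity guaranteed by Proposition~\ref{prop:rt-principles}(3).

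The main obstacle is the step of internally refuting excluded middle; I have sketched the continuity-based argument above, and an alternative would be simply to cite that $\RT$ does not validate excluded middle as a standard property of realizability toposes. Once $\lnot$(excluded middle) is available internally, the corollary is immediate by contraposing Proposition~\ref{prop:countableimplieslem} and using the internal validity of Markov's principle in~$\RT$.
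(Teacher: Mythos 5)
Your proof is correct and follows essentially the same route as the paper: contrapose Proposition~\ref{prop:countableimplieslem} using the internal validity of Markov's principle and the failure of excluded middle in~$\RT$. The only difference is that the paper simply cites the standard fact that a realizability topos validates Markov's principle and the negation of excluded middle, whereas you additionally derive $\lnot$(excluded middle) internally from the Extended continuity principle of Proposition~\ref{prop:rt-principles} by constructing a discontinuous case-defined map $\IBB \to \IBB$; that derivation is sound (internal excluded middle makes the case distinction a total functional relation, hence a morphism, contradicting continuity at the zero sequence) and makes the refutation of excluded middle self-contained, at the cost of a little extra work the paper avoids by citation.
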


\begin{proof}
  A realizability topos validates Markov's principle and the negation
  of the law of excluded middle.
\end{proof}

\section{Intrinsic $T_0$-spaces are separable in function realizability}

We now generalize our result to \emph{synthetic topology}. We refer the reader
to~\cite{bauer12:_metric,lesnik10:_synth_topol_const_metric_spaces,escardo04:_synth_topol_data_types_class_spaces}
for comprehensive accounts of synthetic topology, and recall only those concepts
that are needed for our results.
The \emph{Rosolini dominance}~\cite{rosolini86:_contin_effec_topoi} is the
subobject of $\Omega$, defined by
\begin{equation*}
  \ISS =
  \set{ p : \Omega \such \some{\alpha : \set{0,1}^\INN} p \liff (\some{n \in \INN} \alpha_n = 1) }.
\end{equation*}
The object $\ISS$ is modest and can be thought of as an analogue of the
\emph{Sierpiński space}, which classifies open subspaces in the category of
topological spaces and continuous maps. This observation is the starting point
of synthetic topology, where the exponential $\ISS^X$ is taken to be the
\emph{intrinsic topology} of an object~$X$. Thus a subobject of $X$ is
\emph{intrinsically open} when its characteristic map $X \to \Omega$ factors
through the inclusion $\ISS \into \Omega$. We call the elements of $\ISS$
the \emph{open} truth values.

With regards to metric spaces, a fundamental question is how the metric and
intrinsic topologies relate. Because the relation~$<$ on $\IRR$ is intrinsically
open~\cite[Prop.~2.1]{bauer12:_metric}, every metric open ball is intrinsically
open. We note that the converse holds for $\lnot\lnot$-stable subobjects
of~$\IBB$.

\begin{prop}
  \label{prop:notnot-stable-metric-intrinsic}%
  Every intrinsically open subspace of a $\lnot\lnot$-stable subobject
  $S \into \IBB$ is a union of open balls.
\end{prop}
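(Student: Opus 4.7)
The plan is to reduce intrinsic openness to a continuity statement on the characteristic map, and then read off a metric modulus. Given an intrinsically open $U \subseteq S$, the characteristic map $\chi_U : S \to \ISS$ factors through $\ISS$, and by unfolding the definition of the Rosolini dominance we have, for each $\alpha \in S$, the existence of some $\beta \in \set{0,1}^\INN$ with $\alpha \in U \liff \some{n \in \INN} \beta_n = 1$. Since $S$ is $\lnot\lnot$-stable, the Extended function choice (Proposition~\ref{prop:rt-principles}(2)) converts this into a single map $\phi : S \to \set{0,1}^\INN$ such that, for all $\alpha \in S$,
\begin{equation*}
  \alpha \in U \quad\liff\quad \some{n \in \INN} \phi(\alpha)_n = 1.
\end{equation*}

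Next I would apply the Extended continuity principle (Proposition~\ref{prop:rt-principles}(3)) to $\phi$, regarded as a map $S \to \IBB$. Continuity here means that for every $\alpha \in S$ and every $n \in \INN$, there is some $k \in \INN$ such that $\phi(\alpha')_n = \phi(\alpha)_n$ whenever $\alpha' \in S$ satisfies $\prefix{\alpha'}{k} = \prefix{\alpha}{k}$, equivalently whenever $u(\alpha, \alpha') < 2^{-k}$ (by the definition of~$u$, agreement of the first $k$ entries is exactly what pins down the $k$-th open ball).

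From here the argument is direct. Fix $\alpha \in U$. Then there exists $n$ with $\phi(\alpha)_n = 1$; by continuity of $\phi$ there is $k$ such that $\phi(\alpha')_n = 1$ for every $\alpha' \in S$ with $u(\alpha, \alpha') < 2^{-k}$, whence $\alpha' \in U$. Thus the open ball $B(\alpha, 2^{-k}) \cap S$ is contained in $U$, and since every point of $U$ is covered by such a ball centred at itself, $U$ is the union of these open balls.

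The only delicate point will be justifying the use of Extended function choice and Extended continuity on $S$ rather than on all of $\IBB$, but this is precisely what the $\lnot\lnot$-stability hypothesis buys us, since it guarantees that elements of $S$ carry the same realizers as elements of $\IBB$. Everything else is bookkeeping about the correspondence between the metric $u$ and prefix agreement.
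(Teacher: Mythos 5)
Your proposal is correct and follows essentially the same route as the paper: factor the characteristic map through the quotient $\IBB \to \ISS$ (your $\phi$ is the paper's lift $\overline{u}$, obtained by Extended function choice thanks to $\lnot\lnot$-stability of~$S$), apply the Extended continuity principle, and extract a ball around each point of~$U$ from a witnessing index~$n$. The only nitpick is a harmless off-by-one in identifying $u(\alpha,\alpha') < 2^{-k}$ with agreement of the first $k$ entries (strict inequality gives agreement on the first $k+1$), which does not affect the argument.
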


\begin{proof}
  The object~$\ISS$ is a quotient of $\IBB$ by the map
  $\beta \mapsto (\some{n}{\beta(n) = 1})$.
  Consider an intrinsically open subset $U \subseteq S$, with characteristic map
  $u : S \to \ISS$. By Extended function choice $u$ factors through the
  quotient $\IBB \to \ISS$ as a map $\overline{u} : S \to \IBB$. Moreover, by
  the Extended continuity principle the map $\overline{u}$ is continuous.
  Suppose $\alpha \in S$ and $u(\alpha) = \top$. There is $n \in \INN$ such that
  $\overline{u}(\alpha)(n) = 1$. By continuity of~$\overline{u}$ there is an
  open ball~$B(\alpha, r)$ centered at~$\alpha$ such that, for all
  $\beta \in B(\alpha, r)$, we have $\overline{u}(\beta)(n) = 1$, therefore
  $\alpha \in B(\alpha, r) \subseteq U$, as required.
\end{proof}

Many standard topological notions may be formulated in synthetic topology. For
instance, an object~$X$ is an \emph{(intrinsic) $T_0$-space} when the transpose
$X \to \ISS^{\ISS^X}$ of the evaluation map $\ISS^X \times X \to \ISS$
is a monomorphism, i.e., when two points in~$X$ are equal if they have the same
open neighborhoods. Metric spaces are $T_0$-spaces because metric open balls are
intrinsically open.

In classical topology arbitrary unions of opens are open, but this is not so in
synthetic topology. We say that $I$ is \emph{overt} if $\ISS$ is closed under
$I$-indexed unions. In logical form overtness says that for every
$u : I \to \ISS$ the truth value $\some{i \in I} u(i)$ is open. Because
$\ISS$ is a lattice, Kuratowski-finite objects are overt. The natural numbers
$\INN$ are overt as well, by an application of Countable choice.

In general overtness transfers along \emph{(intrinsically) dense} maps, which
are maps $f : X \to Y$ such that, for all $u : Y \to \ISS$,
\begin{equation*}
  (\some{y \in Y} u(y)) \iff (\some{x \in X} u(f(x))).
\end{equation*}
Clearly, if~$X$ is overt and $f : X \to Y$ is dense then $Y$ is overt.

Say that $X$ is \emph{(intrinsically) separable} when there exists a dense map
$\INN \to X$. Because $\INN$ is overt, all intrinsically separable objects are
overt.

\begin{lem}
  \label{lem:intrinsic-subB-separable}
  Every $\lnot\lnot$-stable subobject of $\IBB$ is intrinsically separable.
\end{lem}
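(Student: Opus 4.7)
The plan is to promote the metrically-dense enumeration $c : \INNx \to S + \One$ supplied by Proposition~\ref{prop:subB-separable} into an intrinsically dense total map $f : \INN \to S$. After identifying $\INNx \cong \INN$, I would fix a basepoint $\alpha_\star \in S$ and set $f(a) = c(a)$ whenever $c(a) \in S$ and $f(a) = \alpha_\star$ otherwise, so that $f$ is total into $S$ and agrees with $c$ wherever the latter lands in $S$.

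The easy direction of intrinsic density, $(\some{n \in \INN} \phi(f(n))) \lthen (\some{\alpha \in S} \phi(\alpha))$, is immediate because every $f(n)$ lies in $S$. For the nontrivial direction, given $\phi : S \to \ISS$ and $\alpha \in S$ with $\phi(\alpha) = \top$, I would reuse the strategy from the proof of Proposition~\ref{prop:notnot-stable-metric-intrinsic}: since $\ISS$ is the quotient of $\IBB$ along $\beta \mapsto (\some{n} \beta(n) = 1)$, Extended function choice lifts $\phi$ through this quotient to $\overline{\phi} : S \to \IBB$, and the Extended continuity principle renders $\overline{\phi}$ continuous with respect to the metric $u$. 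Pick $n$ with $\overline{\phi}(\alpha)(n) = 1$; continuity at $\alpha$ yields $k$ such that $\overline{\phi}(\beta)(n) = 1$, hence $\phi(\beta) = \top$, for every $\beta \in S$ with $u(\alpha, \beta) \leq 2^{-k}$. Proposition~\ref{prop:subB-separable} then delivers $c(\prefix{\alpha}{k+1}) \in S$ within distance $2^{-k}$ of $\alpha$, and since $f(\prefix{\alpha}{k+1}) = c(\prefix{\alpha}{k+1})$, the natural number coded by $\prefix{\alpha}{k+1}$ under $\INNx \cong \INN$ witnesses the existential on the right.

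The main obstacle I anticipate is the codomain change from $S + \One$ to $S$: a total map $\INN \to S$ genuinely requires a basepoint in $S$, which cannot be produced uniformly without knowing that $S$ is inhabited. I expect this to be dealt with either by treating the empty case as vacuous (reading the assertion schematically, as the paper already does for other statements quantifying over objects of the topos) or by a minor reformulation of what a \emph{dense map} should be when the codomain is possibly empty. Aside from this point, the proof only recombines ingredients already used in Propositions~\ref{prop:subB-separable} and~\ref{prop:notnot-stable-metric-intrinsic}.
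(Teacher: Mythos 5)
Your proof is correct and is essentially the paper's argument: the paper simply cites Proposition~\ref{prop:subB-separable} for the metrically dense enumeration and Proposition~\ref{prop:notnot-stable-metric-intrinsic} ("intrinsic and metric topologies agree, hence so do both kinds of separability"), whereas you inline the proof of the latter (lifting $\phi : S \to \ISS$ through the quotient $\IBB \onto \ISS$ by Extended function choice and using Extended continuity to find a ball on which $\phi$ holds), so the ingredients and structure coincide. The basepoint issue you flag is genuine, but it is equally implicit in the paper's terse proof, since passing from a countable (possibly empty) dense subset to a total dense map $\INN \to S$ requires $S$ to be inhabited; it is resolved externally because a $\lnot\lnot$-stable subobject of $\IBB$ in $\RT$ is either isomorphic to the empty object or internally inhabited, exactly along the lines of the schematic reading you propose.
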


\begin{proof}
  By Proposition~\ref{prop:notnot-stable-metric-intrinsic}, the intrinsic and
  metric topologies of a $\lnot\lnot$-stable subobject $S \into \IBB$ agree,
  hence so do both kinds of separability. In
  Proposition~\ref{prop:subB-separable} we showed that~$S$ is metrically
  separable.
\end{proof}

As noted earlier, every metric space is a $T_0$-space, and intrinsic
separability implies metric separability. Thus, the following is a
generalization of Theorem~\ref{thm:metric-separable}.

\begin{thm}
  \label{thm:t0-separable}
  In $\RT$, every intrinsically $T_0$-space is intrinsically separable.
\end{thm}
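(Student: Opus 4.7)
The plan is to mirror the proof of Theorem~\ref{thm:metric-separable}, replacing the metric embedding $X \into \IRR^X$ with the intrinsic $T_0$-embedding $X \into \ISS^{\ISS^X}$ and using Lemma~\ref{lem:intrinsic-subB-separable} in place of Proposition~\ref{prop:subB-separable}.

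First I would argue that every $T_0$-space $X$ in $\RT$ is modest. The Rosolini dominance $\ISS$ is modest (as noted where it is introduced), so by the closure of modest objects under powers and subobjects, $\ISS^{\ISS^X}$ is modest; the $T_0$ monomorphism $X \into \ISS^{\ISS^X}$ then exhibits $X$ as a subobject of a modest set, hence $X$ itself is modest. Consequently there exist a $\lnot\lnot$-stable subobject $S \into \IBB$ and a surjection $q : S \onto X$.

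By Lemma~\ref{lem:intrinsic-subB-separable} there is an intrinsically dense map $e : \INN \to S$, and I would finish by verifying that $q \circ e : \INN \to X$ is again dense. For any $u : X \to \ISS$, surjectivity of $q$ yields $(\some{x \in X} u(x)) \iff (\some{s \in S} u(q(s)))$, while density of $e$ applied to the intrinsic open $u \circ q : S \to \ISS$ yields $(\some{s \in S} u(q(s))) \iff (\some{n \in \INN} u(q(e(n))))$; chaining the two equivalences gives density of $q \circ e$. I do not anticipate a substantive obstacle: once the analogy between the metric embedding $X \into \IRR^X$ and the $T_0$ embedding $X \into \ISS^{\ISS^X}$ is spotted, each ingredient is either already established in the paper or follows from a one-line unwinding of the definition of dense map.
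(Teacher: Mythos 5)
Your proposal is correct and follows the paper's own proof essentially verbatim: modestness of $X$ via the $T_0$-embedding into $\ISS^{\ISS^X}$, a surjection $q : S \onto X$ from a $\lnot\lnot$-stable $S \into \IBB$, Lemma~\ref{lem:intrinsic-subB-separable} for a dense $\INN \to S$, and the observation that a dense map followed by a surjection is dense. Your explicit unwinding of that last step is exactly what the paper leaves as a one-liner, so there is nothing to add.
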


\begin{proof}
  Suppose $X$ is a $T_0$-space. Then it is a subobject of the modest object
  $\ISS^{\ISS^X}$, and so it is also modest. Hence there exists a surjection
  $q : S \onto X$ from a $\lnot \lnot$-stable subobject $S \into \IBB$. By
  Lemma~\ref{lem:intrinsic-subB-separable} there is a dense map
  $f : \INN \to S$. But a dense map followed by a surjection is dense, and so
  $q \circ f$ witnesses intrinsic separability of~$X$.
\end{proof}

Finally, a generalization of Theorem~\ref{thm:decidable-countable} is readily
available. An object $X$ is \emph{(intrinsically) discrete} when the equality
relation on~$X$ is open, or equivalently, when every singleton in~$X$ is open.

\begin{thm}
  \label{thm:discrete-countable}
  In~$\RT$, every intrinsically discrete space is countable.
\end{thm}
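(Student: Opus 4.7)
The plan is to deduce Theorem~\ref{thm:discrete-countable} from Theorem~\ref{thm:t0-separable} in exactly the way Theorem~\ref{thm:decidable-countable} was deduced from Theorem~\ref{thm:metric-separable}: intrinsic separability suffices to enumerate a discrete space, because under discreteness the equality predicate itself lies in $\ISS$ and can be probed against a dense sequence.

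The first step is to verify that every intrinsically discrete object $X$ is a $T_0$-space. Since every singleton $\set{x}$ is intrinsically open, two points $x, y \in X$ with the same open neighbourhoods must satisfy $y \in \set{x}$, hence $y = x$. Consequently Theorem~\ref{thm:t0-separable} applies and yields a dense map $f : \INN \to X$.

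The second step promotes density to surjectivity, using discreteness essentially. For any fixed $x \in X$, the map $u : X \to \ISS$ given by $u(y) = (y = x)$ is well-typed precisely because $X$ is discrete. Taking $y = x$ witnesses $\some{y \in X} u(y)$, so density of $f$ yields $\some{n \in \INN} u(f(n))$, i.e., $f(n) = x$ for some $n$. Composing $f$ with the coproduct injection $X \into X + 1$ and shifting indices (for instance $0 \mapsto \star$ and $n + 1 \mapsto f(n)$) produces an enumeration $\INN \onto X + 1$ witnessing countability of~$X$.

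I do not anticipate a serious obstacle: the theorem is essentially a corollary of Theorem~\ref{thm:t0-separable}. The only moving part is the observation that discreteness ensures the predicate $y \mapsto (y = x)$ factors through $\ISS$ rather than merely $\Omega$, which is precisely the equivalent reformulation of discreteness recorded just before the theorem statement.
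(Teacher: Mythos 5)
Your proposal is correct and follows the paper's own argument: discreteness gives $T_0$ (singletons are open), Theorem~\ref{thm:t0-separable} supplies a dense map $\INN \to X$, and openness of singletons again upgrades density to surjectivity. The extra care you take in producing the enumeration $\INN \onto X + 1$ is a fine elaboration of what the paper leaves implicit.
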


\begin{proof}
  An intrinsically discrete space $X$ is a $T_0$-space because in~$X$ singletons
  are open. By Theorem~\ref{thm:t0-separable} there is a dense map
  $f : \INN \to X$, which must be a surjection because, again, singletons are
  open.
\end{proof}

\section{Conclusion}
\label{sec:conclusion}

In view of our results, it is natural to wonder what goes wrong
intuitionistically with the classical non-separable spaces, such as
$\ell^\infty$ and $L^\infty[0,1]$. Do they somehow become separable? One of
several things can happen. In $\RT$ every map $[0,1] \to \IRR$ is uniformly
continuous so that $L^\infty[0,1]$ is just the space of uniformly continuous
maps with the supremum norm, which of course is separable. On the other hand,
$\ell^\infty$ cannot even be constructed, at least not using the classical
definition of the norm
\begin{equation*}
  \|a\|_\infty = \sup_{n \in \INN} |a_n|.
\end{equation*}
For a bounded $a : \INN \to \IRR$ the supremum need not exist, so that further
restrictions on~$a$ are required. The most generous attempt would collect into
$\ell^\infty$ all the sequences for which $\|a\|_\infty$ exists\,---\,but doing
so would break the vector space structure, and consequently the definition of
the metric. We could also observe that $\|a\|_\infty$ is a well-defined
\emph{lower} real number, i.e., we may give it as a lower Dedekind cut. How much
of the mathematics of $\ell^\infty$, and similar spaces, one can recover this
way was studied by Fred Richman~\cite{Richman:1998},

Lastly, we remark that the results are quite closely tied to~$\RT$ because of
the non-computable nature of the Excluded middle for predicates on~$\INN$ from
Proposition~\ref{prop:rt-principles} (apply the principle to the statement
``the $n$-th Turing machine halts'' to obtain the halting oracle).
A close cousin of $\RT$ is the Kleene-Vesley topos, which is defined
as the relative realizability topos on the partial combinatory subalgebra
of~$\Ktwo$ consisting of the computable functions~\cite[\S
4.5]{oosten08:_realiz}.
Because in this topos all statements must be realized by computable maps, the
last part of the proof of Proposition~\ref{prop:rt-principles} fails. Indeed, an
uncountable object with decidable equality is readily available, just take a
subset of~$\NN$ which is not computably enumerable, and therefore not countable
internally to the topos.

\bibliographystyle{plain}
\bibliography{all-spaces-separable}

\end{document}